%%%%%%%%%%%%%%%%%%%%%%%%%%%%%%%%%%%%%%%%%%%%%%%%%%%%%%%%%%%%%%%%%%%%%%%%%
%%% Latex template for IJM papers, enhanced version
%%%%%%%%%%%%%%%%%%%%%%%%%%%%%%%%%%%%%%%%%%%%%%%%%%%%%%%%%%%%%%%%%%%%%%%%%

\documentclass[draft]{amsart}

\setlength{\textheight}{43pc}
\setlength{\textwidth}{28pc}

\newtheorem{thm}{Theorem}[section]

\newtheorem{prop}[thm]{Corollary}
\newtheorem{lem}[thm]{Lemma}

\theoremstyle{remark}

\theoremstyle{definition}

\numberwithin{equation}{section}
\numberwithin{thm}{section}

\begin{document}

\subjclass{14C20, 32Q05, 32Q10}

\title{On the K\"ahler structures over Quot schemes}

\author{Indranil Biswas}
\address{School of Mathematics, Tata Institute of Fundamental
Research, Homi Bhabha Road, Bombay 400005, India
}

\email{indranil@math.tifr.res.in}

\author{Harish Seshadri}

\address{Indian Institute of Science, Department of Mathematics,
Bangalore 560003, India
}

\email{harish@math.iisc.ernet.in}

\thanks{The first-named author is supported by
the J. C. Bose Fellowship.}

\begin{abstract}
Let $S^n(X)$ be the $n$-fold symmetric product of a compact connected Riemann surface
$X$ of genus $g$ and gonality $d$. We prove that $S^n(X)$ admits a K\"ahler structure
such that all the holomorphic bisectional curvatures are nonpositive if and only if $n\,
<\, d$. Let ${\mathcal Q}_X(r,n)$ be the Quot scheme parametrizing the torsion
quotients of ${\mathcal O}^{\oplus r}_X$ of degree $n$. If $g\, \geq\, 2$ and $n\,
\leq\, 2g-2$, we prove that ${\mathcal Q}_X(r,n)$ does not admit a K\"ahler structure
such that all the holomorphic bisectional curvatures are nonnegative.
\end{abstract}

\maketitle

\section{Introduction}

Let $X$ be a compact connected Riemann surface of genus $g$ and gonality $d$. For a
positive integer $n$, let $S^n(X)$ denote the $n$-fold symmetric product of $X$. More
generally, ${\mathcal Q}_X(r,n)$ will denote that Quot scheme that parametrizes all
the torsion quotients of ${\mathcal O}^{\oplus r}_X$ of degree $n$.
So $S^n(X)\,=\, {\mathcal Q}_X(1,n)$. This ${\mathcal Q}_X(r,n)$ is a complex
projective manifold.

We prove the following (see Theorem \ref{thm1}):

\textit{The symmetric product $S^n(X)$ admits a
K\"ahler structure satisfying the condition that all the holomorphic
bisectional curvatures are nonpositive if and only if $n\, <\, d$.}

The ``only if'' part was proved in \cite{Bi1}.

The main theorem of \cite{BR} says the following (see \cite[Theorem 1.1]{BR}):
If $g\, \geq\, 2$ and $n\, \leq\, 2(g-1)$, then $S^n(X)$ does not admit any K\"ahler
metric for which all the holomorphic bisectional curvatures are nonnegative.
A simpler proof of this result was given in \cite{Bi2}. Here we prove the
following generalization of it (see Proposition \ref{thm2}):

\textit{Assume that $g\, \geq\, 2$ and $n\, \leq\, 2(g-1)$. Then ${\mathcal Q}_X
(r,n)$ does not admit any K\"ahler structure such that all the holomorphic
bisectional curvatures are nonnegative.}

If $r\, >\, 1$, the method in \cite{Bi2} give a much weaker version of
Proposition \ref{thm2}.

\medskip
\noindent
\textbf{Acknowledgements.} We thank the referee for helpful comments.

\section{Preliminaries}

Let $X$ be a compact connected Riemann surface of genus $g$.
For any positive integer $n$, consider the Cartesian product $X^n$.
Denote by $P_n$ the group of permutations of $\{1\, , \cdots \, ,n\}$.
The group $P_n$ has a natural action on $X^n$. The quotient $X^n/P_n$
will be denoted by $S^n(X)$; it is called the $n$-\textit{fold
symmetric product} of $X$.

Let ${\mathcal O}_X$ denote the sheaf of germs of holomorphic functions on $X$.
For a positive integer $r$, consider the sheaf ${\mathcal O}^{\oplus r}_X$ of
germs of holomorphic sections of the trivial holomorphic vector bundle on $X$ of
rank $r$. For any positive integer $n$, let
$$
{\mathcal Q}\, :=\, {\mathcal Q}_X(r,n)
$$
be the Quot scheme parametrizing all the torsion quotients of degree $n$
of the ${\mathcal O}_X$--module ${\mathcal O}^{\oplus
r}_X$. Equivalently, points of ${\mathcal Q}$ parametrize coherent analytic
subsheaves of ${\mathcal O}^{\oplus r}_X$ of rank $r$ and degree $-n$. This
${\mathcal Q}$ is an irreducible smooth complex projective variety of
dimension $rn$ \cite[p. 1, Theorem 2]{Be}.

Note that ${\mathcal Q}_X(1,n)$ is identified with the symmetric
product $S^n(X)$ by sending a quotient of ${\mathcal O}_X$ to the
scheme--theoretic support of it. If we consider ${\mathcal Q}_X(1,n)$ as
the parameter space for the coherent analytic subsheaves of ${\mathcal O}_X$
of rank $1$ and degree $-n$, then the above identification of
${\mathcal Q}_X(1,n)$ with $S^n(X)$ sends a subsheaf
$\psi\, :\, L\, \hookrightarrow\, {\mathcal O}_X$ to the divisor of $\psi$.

The \textit{gonality} of $X$ is the
smallest integer $d$ such that there is a nonconstant holomorphic map
$X\, \longrightarrow\, {\mathbb C}{\mathbb P}^1$ of degree $d$
(see \cite[p. 171]{Ei}). Therefore, the gonality
of $X$ is one if and only if $g\,=\, 0$. If $g\, \in\, \{1\, ,2\}$, then
the gonality of $X$ is two. More generally, the gonality of $X$ is two if and
only if $X$ is hyperelliptic of positive genus.

\section{Nonpositive holomorphic bisectional curvatures}

\begin{thm}\label{thm1}
Let $d$ denote the gonality of $X$. The symmetric product $S^n(X)$
admits a K\"ahler structure satisfying the condition that all the holomorphic
bisectional curvatures are nonpositive if and only if $n\, <\, d$.
\end{thm}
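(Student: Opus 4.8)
Since the ``only if'' direction is established in \cite{Bi1}, the task is to produce, under the hypothesis $n\,<\,d$, a K\"ahler metric on $S^n(X)$ all of whose holomorphic bisectional curvatures are nonpositive. The plan is to realize $S^n(X)$ as a complex submanifold of a compact complex torus equipped with a flat K\"ahler metric, and then to invoke the curvature-decreasing property of K\"ahler submanifolds.

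First I would study the Abel--Jacobi morphism $\phi\,:\,S^n(X)\,\longrightarrow\,\mathrm{Pic}^n(X)$ sending an effective divisor $D$ of degree $n$ to the isomorphism class of ${\mathcal O}_X(D)$, and show that the hypothesis $n\,<\,d$ forces $\phi$ to be a holomorphic embedding. The key point is that if some effective divisor $D$ of degree $n$ satisfied $h^0({\mathcal O}_X(D))\,\geq\, 2$, then after removing the base locus the associated pencil would furnish a nonconstant holomorphic map $X\,\longrightarrow\,{\mathbb C}{\mathbb P}^1$ of degree at most $n\,<\,d$, contradicting the definition of the gonality $d$. Hence $h^0({\mathcal O}_X(D))\,=\,1$ for every effective $D$ of degree $n$. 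This single fact gives both the injectivity of $\phi$ (two linearly equivalent effective divisors of degree $n$ must coincide, since each complete linear system $|{\mathcal O}_X(D)|$ is a reduced point) and the injectivity of the differential $d\phi$ at each $D$ (the kernel of $d\phi_D$ is canonically the tangent space to the fiber $|{\mathcal O}_X(D)|$, which is a point). Since $S^n(X)$ is compact, an injective immersion is an embedding.

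Next I would equip the target with a flat metric. The group $\mathrm{Pic}^n(X)$ is a torsor over the Jacobian $\mathrm{Jac}(X)\,=\, H^1(X,{\mathcal O}_X)/H^1(X,{\mathbb Z})$, hence is itself a compact complex torus. Fixing a Hermitian inner product on $H^1(X,{\mathcal O}_X)$ produces a translation-invariant K\"ahler metric on $\mathrm{Pic}^n(X)$ whose curvature tensor vanishes identically; in particular all of its holomorphic bisectional curvatures are zero. I would then take as the metric on $S^n(X)$ the pullback $\phi^*$ of this flat metric under the embedding. Being the restriction of a K\"ahler form to a complex submanifold, it is again a K\"ahler metric.

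The main step is the curvature comparison. By the Gauss equation for a complex, hence K\"ahler, submanifold $M\,\subset\, N$ carrying the induced metric, the holomorphic bisectional curvature in the directions of tangent vectors $u,v$ satisfies $B_M(u,v)\,=\,B_N(u,v)-\|\sigma(u,v)\|^2$, where $\sigma$ denotes the second fundamental form; the correction term is nonpositive. Applying this with $N\,=\,\mathrm{Pic}^n(X)$, all of whose bisectional curvatures vanish, and $M\,=\,\phi(S^n(X))$, yields $B_M\,\leq\, 0$ everywhere, which is exactly the desired conclusion. I expect the delicate part to be precisely this last step: one must check that for complex submanifolds the second fundamental form is of type $(2,0)$, so that its mixed-type components drop out and it enters the \emph{bisectional} curvature formula (not merely the holomorphic sectional curvature formula) with the correct nonpositive sign. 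This is the content of the Goldberg--Kobayashi curvature-decreasing theorem, and setting up the complex-bilinear extension of $\sigma$ together with consistent sign conventions is where care is needed.
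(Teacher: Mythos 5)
Your proposal is correct and follows essentially the same route as the paper: use $n<d$ to force $h^0(\mathcal{O}_X(D))=1$ for every effective degree-$n$ divisor $D$, deduce that the Abel--Jacobi map to $\mathrm{Pic}^n(X)$ is an immersion, and pull back a flat K\"ahler metric, invoking the curvature-decreasing property of K\"ahler submanifolds. (The paper only needs and only proves that $\varphi$ is an immersion, not an embedding, since both the pullback of the metric and the Gauss-equation comparison are local; your extra injectivity step is harmless but unnecessary.)
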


\begin{proof}
If $n\, \geq \, d$, then we know that $S^n(X)$ does not
admit any K\"ahler structure such that all the holomorphic bisectional
curvatures are nonpositive \cite[p. 1491, Proposition 3.2]{Bi1}. We recall
that this follows from the fact that there is a nonconstant holomorphic
embedding of ${\mathbb C}{\mathbb P}^1$ in $S^n(X)$ if $n\, \geq \, d$.

So assume that $n\, <\, d$.

Let
\begin{equation}\label{vp}
\varphi\, :\, S^n(X)\,\longrightarrow\, \text{Pic}^n(X)
\end{equation}
be the natural holomorphic map that sends any $\{x_1\, ,\cdots\, ,x_n\}
\,\in\, S^n(X)$ to the holomorphic line bundle ${\mathcal O}_X(\sum_{i=1}^n x_i)$.
We will show that $\varphi$ is an immersion.

Take any point $\underline{x}\, =\, \{x_1\, ,\cdots\, ,x_n\}\,\in\, S^n(X)$.
The divisor $\sum_{i=1}^n x_i$ will be denoted by $D$. Let
\begin{equation}\label{D}
0\, \longrightarrow\, {\mathcal O}_X(-D)\, \longrightarrow\, {\mathcal O}_X
\, \longrightarrow\, Q'(\underline{x})\,:=\, {\mathcal O}_X/{\mathcal O}_X(-D)
\, \longrightarrow\, 0
\end{equation}
be the short exact sequence corresponding to the point $\underline{x}$.
tensoring it with ${\mathcal O}_X(-D)^*\,=\, {\mathcal O}_X(D)$ we get the short exact
sequence
$$
0\, \longrightarrow\, End({\mathcal O}_X(-D))\,=\,{\mathcal O}_X
\, \longrightarrow\, Hom({\mathcal O}_X(-D)\, ,{\mathcal O}_X)\,=\,
{\mathcal O}_X(D)
$$
$$
\longrightarrow\, Q(\underline{x})\,:=\,
Hom({\mathcal O}_X(-D)\, ,Q'(\underline{x}))\, \longrightarrow\, 0\, .
$$
Let
\begin{equation}\label{e1}
0\, \longrightarrow\, H^0(X,\, {\mathcal O}_X)\,\stackrel{\alpha}{\longrightarrow}\,
H^0(X,\, {\mathcal O}_X(D))\,\stackrel{\beta}{\longrightarrow}\, H^0(X,\,
Q(\underline{x}))\,\stackrel{\gamma}{\longrightarrow}\, H^1(X,\, {\mathcal O}_X)
\end{equation}
be the long exact sequence of cohomologies associated to this short exact sequence
of sheaves.

The holomorphic tangent space to $S^n(X)$ at $\underline{x}$ is
$$
T_{\underline{x}}S^n(X)\,=\, H^0(X,\, Q(\underline{x}))\, ,
$$
and the tangent bundle of $\text{Pic}^n(X)$ is the trivial vector bundle
with fiber $H^1(X,\, {\mathcal O}_X)$. The differential at $\underline{x}$
of the map $\varphi$ in \eqref{vp}
$$
(d\varphi)(\underline{x})\, :\, T_{\underline{x}}S^n(X)\,=\, H^0(X,\,
Q(\underline{x}))\,\longrightarrow\, T_{\varphi(\underline{x})}\text{Pic}^n(X)\,=\,
H^1(X,\, {\mathcal O}_X)
$$
satisfies the identity
\begin{equation}\label{e2}
(d\varphi)(\underline{x})\,=\, \gamma\, ,
\end{equation}
where $\gamma$ is the homomorphism in \eqref{e1}.

Now, $H^0(X,\, {\mathcal O}_X)\,=\, \mathbb C$. Since $n\, <\, d$, it can be
shown that
$$
H^0(X,\, {\mathcal O}_X(D))\,=\, \mathbb C\, .
$$
Indeed, $\dim H^0(X,\, {\mathcal O}_X(D))\, \geq\, 1$ because $D$ is effective.
If
$$
\dim H^0(X,\, {\mathcal O}_X(D))\, \geq\, 2\, ,
$$
then considering the partial linear system given by two linearly
independent sections of ${\mathcal O}_X(D)$ we get a holomorphic map from
$X$ to ${\mathbb C}{\mathbb P}^1$ whose degree coincides with the degree of $D$.
This contradicts the fact that the gonality of $X$ is strictly bigger than $n$.
Therefore, $H^0(X,\, {\mathcal O}_X(D))\,=\, \mathbb C$.

Since $H^0(X,\, {\mathcal O}_X(D))\,=\, \mathbb C$,
the homomorphism $\alpha$ in \eqref{e1} is an isomorphism. Hence $\beta$ in
the exact sequence \eqref{e1} is the zero homomorphism and $\gamma$ in \eqref{e1}
is injective.

Since $\gamma$ in \eqref{e1} is injective, from \eqref{e2} we conclude that
$\varphi$ is an immersion.

The compact complex torus $\text{Pic}^n(X)$ admits a flat K\"ahler metric
$\omega$. The pullback $\varphi^*\omega$ is a K\"ahler metric on $S^n(X)$
because $\varphi$ is an immersion. Since $\omega$ is flat, all the 
holomorphic bisectional curvatures of $\varphi^*\omega$ are nonpositive.
\end{proof}

\begin{lem}\label{lem1}
Take $r\, \geq\, 2$ and take any positive integer $n$. 
The Quot scheme ${\mathcal Q}_X(r,n)$ does not
admit any K\"ahler structure such that all the holomorphic bisectional
curvatures are nonpositive.
\end{lem}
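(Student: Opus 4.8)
The plan is to reduce the statement to the same rationality obstruction that was already used at the start of the proof of Theorem~\ref{thm1}. Recall that \cite[p.~1491, Proposition~3.2]{Bi1} asserts that a complex manifold admitting a nonconstant holomorphic map from ${\mathbb C}{\mathbb P}^1$ cannot carry any K\"ahler metric all of whose holomorphic bisectional curvatures are nonpositive; morally, the restriction of such a metric to a rational curve would violate Gauss--Bonnet on ${\mathbb C}{\mathbb P}^1$. Consequently it suffices to exhibit, for every $r\,\geq\,2$ and every $n\,\geq\,1$, a nonconstant holomorphic map
$$
f\,:\,{\mathbb C}{\mathbb P}^1\,\longrightarrow\,{\mathcal Q}_X(r,n)\, .
$$

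To construct $f$ I would exploit the extra directions available once the rank $r$ is at least two. Fix $n$ distinct points $x_1\, ,\cdots\, ,x_n\,\in\, X$. A torsion quotient of ${\mathcal O}^{\oplus r}_X$ of length one supported at a point $x$ is given by a surjection of the fiber $({\mathcal O}^{\oplus r}_X)_x\,=\,{\mathbb C}^r$ onto ${\mathbb C}$, and two such give the same point of the Quot scheme precisely when the corresponding functionals are proportional; hence these length-one quotients supported at $x$ form a copy of ${\mathbb P}(({\mathbb C}^r)^*)\,\cong\,{\mathbb C}{\mathbb P}^{r-1}$. Choosing two linearly independent functionals $\phi_0\, ,\phi_1$ on ${\mathbb C}^r$, and fixing once and for all a length-one quotient at each of $x_2\, ,\cdots\, ,x_n$, I would send $[\lambda\,:\,\mu]\,\in\,{\mathbb C}{\mathbb P}^1$ to the length-$n$ quotient whose summand at $x_1$ is cut out by $\lambda\phi_0+\mu\phi_1$ (which is nonzero for all $[\lambda\,:\,\mu]$ since $\phi_0\, ,\phi_1$ are independent) and whose summands at $x_2\, ,\cdots\, ,x_n$ are the fixed ones.

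It then remains to check that $f$ is a well-defined nonconstant morphism. The kernel of $\lambda\phi_0+\mu\phi_1$ is a hyperplane in ${\mathbb C}^r$ depending holomorphically on $[\lambda\,:\,\mu]$, so the associated subsheaves of ${\mathcal O}^{\oplus r}_X$ form a family over ${\mathbb C}{\mathbb P}^1$; since its fibrewise length is the constant $n$ and the base is smooth, the family is flat, and by the universal property of ${\mathcal Q}_X(r,n)$ it defines a morphism. Distinct values of $[\lambda\,:\,\mu]$ give distinct hyperplanes, hence distinct subsheaves, so $f$ is injective and in particular nonconstant; it is in fact the inclusion of a line ${\mathbb C}{\mathbb P}^1\,\subset\,{\mathbb C}{\mathbb P}^{r-1}$ into a single fibre of the ``support at $x_1$'' construction. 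Invoking the obstruction recalled in the first paragraph then finishes the proof.

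The step I expect to demand the most care is the verification that the family over ${\mathbb C}{\mathbb P}^1$ is genuinely flat, so that the construction lands in the Quot scheme itself rather than merely in its set of points; but this is routine once the fibrewise length is seen to be the constant $n$. Everything else is elementary linear algebra together with the cited curvature principle, and the argument is uniform in $n$, requiring only $r\,\geq\,2$.
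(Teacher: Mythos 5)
Your proposal is correct and is essentially the argument of the paper: both proofs exhibit a rational curve in ${\mathcal Q}_X(r,n)$ among the quotients supported at $n$ fixed distinct points (where such quotients form $({\mathbb C}{\mathbb P}^{r-1})^n$, nontrivial exactly when $r\,\geq\,2$) and then invoke the obstruction from \cite{Bi1} that a manifold containing a nonconstant holomorphic image of ${\mathbb C}{\mathbb P}^1$ carries no K\"ahler metric with nonpositive holomorphic bisectional curvatures. The only cosmetic difference is that the paper realizes $({\mathbb C}{\mathbb P}^{r-1})^n$ as a fiber of the determinant map $f\,:\,{\mathcal Q}_X(r,n)\,\longrightarrow\, S^n(X)$, whereas you construct the line directly by varying a pencil of functionals at a single point.
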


\begin{proof}
Let ${\mathcal K} \longrightarrow\, {\mathcal O}^{\oplus r}_{X\times
{\mathcal Q}_X(r,n)}$ be the universal homomorphism on $X\times
{\mathcal Q}_X(r,n)$. Let
$$
\bigwedge\nolimits^r{\mathcal K} \longrightarrow\,
\bigwedge\nolimits^r{\mathcal O}^{\oplus r}_{X\times
{\mathcal Q}_X(r,n)}\,=\, {\mathcal O}_{X\times
{\mathcal Q}_X(r,n)}
$$
be the top exterior product. This homomorphism produces a holomorphic map
\begin{equation}\label{f}
f\,:\, {\mathcal Q}_X(r,n)\,\longrightarrow\, S^n(X)\, .
\end{equation}
Take any $\underline{x}\, =\, \{x_1\, ,\cdots\, ,x_n\}\,\in\,
S^n(X)$ such that all $x_i$ are distinct points. Then $f^{-1}(\underline{x})$ is
isomorphic to $({\mathbb C}{\mathbb P}^{r-1})^n$. In particular, there are
embeddings of ${\mathbb C}{\mathbb P}^{1}$ in ${\mathcal Q}_X(r,n)$. This
immediately implies that ${\mathcal Q}_X(r,n)$ does not
admit any K\"ahler structure such that all the holomorphic bisectional
curvatures are nonpositive.
\end{proof}

\section{Nonnegative holomorphic bisectional curvatures}

In this section we assume that $g\, \geq\, 2$.

\begin{prop}\label{thm2}
If $n\, \leq\, 2(g-1)$, then ${\mathcal Q}_X(r,n)$ does not admit any K\"ahler
structure such that all the holomorphic bisectional curvatures are nonnegative.
\end{prop}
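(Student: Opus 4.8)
The plan is to compare $\mathcal Q_X(r,n)$ with the symmetric product through the map $f$ of \eqref{f} from Lemma \ref{lem1}, reducing everything to the Abel--Jacobi map $\varphi$ of \eqref{vp}. The central tool is the standard fact that a compact K\"ahler manifold $M$ carrying a metric of nonnegative holomorphic bisectional curvature has nonnegative Ricci curvature, so that by the Bochner technique every holomorphic $1$-form on $M$ is parallel. A parallel family of holomorphic $1$-forms has pointwise constant rank, whence the Albanese morphism $\text{alb}_M\colon M\to \text{Alb}(M)$ has constant rank; since its image generates $\text{Alb}(M)$ and, by constancy of rank, is a subtorus, $\text{alb}_M$ must be a surjective submersion. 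Thus it suffices to exhibit a single point of $\mathcal Q:=\mathcal Q_X(r,n)$ at which the Albanese morphism fails to be a submersion.

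First I would identify the Albanese of $\mathcal Q$. The morphism $f$ of \eqref{f} is surjective, and its general fibre is $({\mathbb C}{\mathbb P}^{r-1})^n$ (as in the proof of Lemma \ref{lem1}), which is rationally connected; consequently every holomorphic $1$-form on $\mathcal Q$ restricts to zero on the general fibre and descends, so $f^*\colon H^0(S^n(X),\, \Omega^1)\to H^0(\mathcal Q,\, \Omega^1)$ is an isomorphism. Since $\text{Alb}(S^n(X))=\text{Pic}^0(X)$ with Albanese map the Abel--Jacobi map $\varphi$, it follows that $\text{Alb}(\mathcal Q)=\text{Pic}^0(X)$, that $h^{1,0}(\mathcal Q)=g$, and that, up to a translation, the Albanese morphism of $\mathcal Q$ is $\varphi\circ f\colon \mathcal Q\to \text{Pic}^n(X)$.

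Next I would locate a critical point of $\varphi$. The differential computation \eqref{e1}--\eqref{e2} is valid for arbitrary $n$, and it shows that $\varphi$ is a submersion at $\underline x = D$ exactly when the coboundary $\gamma$ is surjective, equivalently when the connecting map $H^1(X,\, \mathcal O_X)\to H^1(X,\, \mathcal O_X(D))$ vanishes. By Serre duality this map is dual to the inclusion $H^0(X,\, K_X(-D))\hookrightarrow H^0(X,\, K_X)$, so $\varphi$ fails to be a submersion at $D$ precisely when $h^0(X,\, K_X(-D))\ge 1$, that is, when $K_X-D$ is effective. Since $g\ge 2$ and $n\le 2g-2=\deg K_X$, a reduced canonical divisor $C=p_1+\cdots+p_{2g-2}$ exists, and $D:=p_1+\cdots+p_n$ then satisfies $K_X-D\sim p_{n+1}+\cdots+p_{2g-2}\ge 0$; hence $\varphi$ has a critical point at this $D$.

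Finally, as $f$ is surjective I can pick $q\in\mathcal Q$ with $f(q)=D$. Then the image of $d(\varphi\circ f)_q$ is contained in the image of $d\varphi_D$, which is a proper subspace of $T_{\varphi(D)}\text{Pic}^n(X)$, so $\varphi\circ f$ is not a submersion at $q$. This contradicts the submersion property of the Albanese morphism established in the first paragraph, and the proof is complete. The step I expect to be the main obstacle is the identification of the Albanese: one must justify that the rationally connected fibres of $f$ force $f^*$ to be an isomorphism on holomorphic $1$-forms, so that the Albanese morphism of $\mathcal Q$ really is $\varphi\circ f$. This is precisely the input that is unavailable to the cruder argument of \cite{Bi2} when $r>1$, and it is what allows the full conclusion rather than a weaker one.
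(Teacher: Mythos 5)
Your proof is correct and has the same skeleton as the paper's --- identify the Albanese morphism of ${\mathcal Q}_X(r,n)$ with $\varphi\circ f$, show the curvature hypothesis forces the Albanese morphism to be a surjective submersion, and then exhibit a critical point of $\varphi$ at a divisor $D$ with $K_X-D$ effective via Serre duality (this last step is verbatim the paper's computation with \eqref{f1}) --- but you justify the two key inputs differently. For the submersion property, the paper passes through nefness of the tangent bundle and invokes \cite{CP} and \cite[Prop.\ 3.9]{DPS}, whereas you use the classical Bochner/Lichnerowicz argument (nonnegative bisectional $\Rightarrow$ nonnegative Ricci $\Rightarrow$ holomorphic $1$--forms parallel); your route is more elementary and self-contained, and in fact the cleanest phrasing of your ``constant rank $\Rightarrow$ subtorus'' step is simply that a nonzero parallel form never vanishes, so the coevaluation $H^0(M,\Omega^1)\to \Omega^1_{M,x}$ is injective at every $x$, i.e.\ $\mathrm{alb}_M$ is everywhere submersive. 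For the identification of the Albanese, the paper cites the $H^1(\cdot,{\mathbb Q})$ isomorphisms of \cite{Ma} and \cite[Prop.\ 4.2]{BGL}, whereas you argue from the rational connectedness of the general fibre $({\mathbb C}{\mathbb P}^{r-1})^n$ of $f$ that $f^*$ is an isomorphism on holomorphic $1$--forms. This is the one place where your write-up is thin: a $1$--form vanishing on the general fibre gives a descended form only over the locus where $f$ is smooth, and extending it across the discriminant (equivalently, handling the punctual Quot-scheme fibres over non-reduced divisors) requires an extra standard argument --- e.g.\ pulling back along a generically finite multisection, or noting that rationally connected fibres admit no nonconstant maps to $\mathrm{Alb}({\mathcal Q})$ so that $\mathrm{alb}_{\mathcal Q}$ factors through $S^n(X)$ by rigidity. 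The paper's citation of \cite{BGL} sidesteps this entirely; note also that an isogeny $\mathrm{Alb}({\mathcal Q})\to \mathrm{Pic}^0(X)$, rather than an isomorphism, already suffices for your contradiction. Finally, your insistence on a \emph{reduced} canonical divisor is unnecessary (though available, since $|K_X|$ is base-point-free of positive dimension for $g\ge 2$): the paper simply truncates an arbitrary canonical divisor.
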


\begin{proof}
Assume that ${\mathcal Q}_X(r,n)$ has a K\"ahler structure $\omega$
such that all the holomorphic bisectional curvatures for $\omega$ are nonnegative.
Consequently, tangent bundle $T{\mathcal Q}_X(r,n)$ is nef.
See \cite[p. 305, Definition 1.9]{DPS} for the definition of a nef vector
bundle; nef line bundles are introduced in \cite[p. 299, Definition 1.2]{DPS}.
Since ${\mathcal Q}_X(r,n)$ is a complex projective manifold, nef bundles
on ${\mathcal Q}_X(r,n)$ in the sense of \cite{DPS} coincide with the
nef bundles on ${\mathcal Q}_X(r,n)$ in the algebraic geometric sense (see lines 13--14
(from top) in \cite[p. 296]{DPS}).

Let
\begin{equation}\label{de}
\delta\,:=\, \varphi\circ f\,:\, {\mathcal Q}_X(r,n) \,\longrightarrow
\,\text{Pic}^n(X)
\end{equation}
be the composition, where $\varphi$ and $f$ are constructed in \eqref{vp}
and \eqref{f} respectively. The homomorphism
$$
H^1(\text{Pic}^n(X),\, {\mathbb Q})\,\longrightarrow\, H^1(S^n(X),\, {\mathbb Q})\, ,
~\ c\, \longmapsto\, \varphi^* c
$$
is an isomorphism \cite[p. 325, (6.3)]{Ma}. Also, the homomorphism
$$
H^(S^n(X),\, {\mathbb Q})\,\longrightarrow\, H^1({\mathcal Q}_X(r,n),\,
{\mathbb Q})\, ,~\ c\, \longmapsto\, f^* c
$$
is an isomorphism \cite[p. 647, Proposition 4.2]{BGL} (see also the
last line of \cite[p. 647]{BGL}). Combining these we conclude that the homomorphism
$$
H^1(\text{Pic}^n(X),\, {\mathbb Q})\,\longrightarrow\,H^1({\mathcal Q}_X(r,n),\,
{\mathbb Q})\, ,~\ c\, \longmapsto\, \delta^* c
$$
is an isomorphism, where $\delta$ is constructed in \eqref{de}.

Since the fibers of $\delta$ are connected, this implies
that $\delta$ is the Albanese morphism for ${\mathcal Q}_X(r,n)$.
Since the tangent bundle of ${\mathcal Q}_X(r,n)$ is nef, the
Albanese map $\delta$ is a holomorphic surjective submersion onto
$\text{Pic}^n(X)$ \cite{CP}, \cite[p. 321, Proposition 3.9]{DPS}.

Since $\delta$ is surjective, the map $\varphi$ in \eqref{vp} is surjective.
Therefore,
$$
g\,=\, \dim \text{Pic}^n(X)\, \leq\, \dim S^n(X)\,=\, n\, .
$$
The map $\varphi$ is a submersion because $\delta$ is a submersion and
$f$ is surjective.

We will show that $\varphi$ is not a submersion if $n\, \leq\, 2(g-1)$.

Take any $n\, \leq\, 2(g-1)$. Let $D'$ be the divisor of a holomorphic
$1$--form on $X$. We note that the degree of $D'$ is $2(g-1)$. Take an
effective divisor $D$ on $X$ of degree $n$ such that $D'-D$ is effective. Writing
$D'\,=\, x_1+\ldots +x_{2g-2}$, we may take $D\,=\, x_1+\ldots +x_n$. Substitute
this $D$ in \eqref{D} and consider the corresponding long exact sequence of
cohomologies
\begin{equation}\label{f1}
H^0(X,\,
Q(\underline{x}))\,\stackrel{\gamma}{\longrightarrow}\, H^1(X,\, {\mathcal O}_X)
\,\stackrel{\gamma'}{\longrightarrow}\, H^1(X,\, {\mathcal O}_X(D))
\,\longrightarrow\, H^1(X,\, Q(\underline{x}))
\end{equation}
as in \eqref{e1}. We note that $H^1(X,\, Q(\underline{x}))\,=\,0$ because
$Q(\underline{x})$ is a torsion sheaf on $X$. From Serre duality,
$$
H^1(X,\, {\mathcal O}_X(D))\,=\, H^0(X,\, {\mathcal O}_X(D'-D))^*\, .
$$
Now $H^0(X,\, {\mathcal O}_X(D'-D))\,\not=\, 0$ because $D'-D$ is effective.
Combining these, we conclude that $\gamma'$ in \eqref{f1} is nonzero. Hence
$\gamma$ in \eqref{f1} is not surjective. Therefore, from \eqref{e2} we conclude that
the differential $d\varphi$ of $\varphi$ is not surjective at the point $D\,\in\,S^n(X)$.
In particular, $\varphi$ is not a submersion if $n\, \leq\, 2(g-1)$. This completes
the proof.
\end{proof}

\end{document}